\documentclass[11pt,a4paper]{article}

\usepackage[T1]{fontenc}
\usepackage{lmodern}
\usepackage{amsmath,amssymb,amsthm,mathtools}
\usepackage[margin=28mm]{geometry}
\usepackage{microtype}
\usepackage{graphicx}
\usepackage{placeins}
\usepackage[authoryear,round]{natbib}
\usepackage[hidelinks]{hyperref}
\hypersetup{
  pdftitle={Weighted Risk Aggregation under Long-Memory Dependence: Pathwise Fluctuations of Exposure-Weighted Losses},
  pdfauthor={Elina Moldavskaya},
  pdfsubject={Actuarial application of weighted long-memory LILs},
  pdfkeywords={actuarial risk aggregation, exposure-weighted losses, long-range dependence, law of the iterated logarithm, Hermite rank, Gaussian sequences}
}

\newtheorem{proposition}{Proposition}[section]
\newtheorem{corollary}[proposition]{Corollary}
\numberwithin{equation}{section}
\DeclareMathOperator{\Var}{Var}
\DeclareMathOperator{\Cov}{Cov}
\newcommand{\E}{\mathbb E}
\newcommand{\R}{\mathbb R}
\newcommand{\C}{\mathbb C}
\newcommand{\as}{\text{a.s.}}
\newcommand{\cC}{\mathcal C}
\newcommand{\HH}{\mathfrak H_{\mathbb R}}

\title{Weighted Risk Aggregation under Long-Memory Dependence:\\
Pathwise Fluctuations of Exposure-Weighted Losses}
\author{
Elina Moldavskaya\\[2pt]
\small Technion--Israel Institute of Technology, Israel\\
\small \texttt{elina.mol@technion.ac.il}
}
\date{}

\begin{document}
\maketitle

\begin{abstract}
We model insurance loss in each period as deterministic exposure multiplied by a nonnegative function of a long-memory Gaussian sequence. A weighted law of the iterated logarithm yields the exact almost-sure scale and sharp constant of centered cumulative loss. Loss per unit of cumulative exposure converges almost surely to its mean, with an exact error scale common to all admissible exposure profiles; the sharp constant depends on the exposure exponent. Lognormal and quadratic examples, of Hermite ranks one and two, and a simulation illustrate the results. A known spectral formula expresses the quadratic coefficient through a leading eigenvalue.
\end{abstract}

\medskip
\noindent\textbf{MSC2020 subject classifications.}
Primary 91G05; secondary 60F15, 60G10, 60G15.

\noindent\textbf{Keywords and phrases.}
Actuarial risk aggregation, exposure-weighted losses, long-range dependence,
law of the iterated logarithm, Hermite rank, Gaussian sequences.

\section{Introduction}
\label{sec:introduction}

Cumulative insurance losses reflect both the evolution of exposure and
the dependence of losses across accounting periods. Here risk aggregation
refers to accumulation over the accounting periods of a single portfolio.
A changing exposure
schedule modifies the contribution of each period, while persistent
systematic conditions can link observations separated by long time
intervals. These features appear in several actuarial settings.
\citet{Badescu2016} incorporate temporal dependence and exposure
fluctuations into a marked Cox model for claim reserving.
\citet{Wang2021} study actuarial valuation and hedging through a
Volterra mortality model with long-range dependence. Such models
motivate the study of accumulated fluctuations when a persistent risk
environment acts on a portfolio whose exposure changes over time.

How large are the recurrent deviations of cumulative loss from its
expectation along a single indefinitely observed trajectory? How rapidly
does the observed loss per unit of cumulative exposure stabilize around
its mean? These questions concern temporal risk aggregation and the
precision of an exposure-weighted estimate of mean loss. A law of the
iterated logarithm (LIL) specifies a normalizing sequence and an exact
almost-sure limsup coefficient. Together they describe an eventual
envelope and recurrent excursions arbitrarily close to its asymptotic level.

We index accounting periods by $t=1,2,\ldots$ and model the loss in
period $t$ as $C_t=e_tg(X_t)$, where $(X_t)_{t\geq1}$ is a
stationary standard Gaussian sequence with long memory (long-range
dependence), $g$ is a nonnegative loss
response with finite second moment, and $e_t>0$ is deterministic
exposure. The covariance satisfies
$\rho(h)\sim c h^{-\alpha}L_0(h)$, with $c>0$ and $0<\alpha<1$,
and the exposure has the form $e_t=t^{-d}\ell_e(t)$, with $d\in\R$.
The functions $L_0$ and $\ell_e$ are slowly varying. Writing
$\mu=\E g(X_t)$ and $V_n=\sum_{t=1}^n e_t$, the centered cumulative
loss is
\[
S_n=\cC_n-\mu V_n
   =\sum_{t=1}^n e_t\{g(X_t)-\mu\}.
\]
The Hermite rank $m$ of $g-\mu$ identifies the leading Gaussian-chaos
component of this sum. We work in the region
$\alpha m<1$ and $2d+\alpha m<1$, with the additional spectral and
smoothness assumptions required for nonlinear ranks stated in
Section~\ref{sec:model}.

The probabilistic background includes the long-memory LIL for
nonlinear Gaussian functionals studied by \citet{Taqqu1977} and the
functional LILs for multiple Wiener integrals established by
\citet{MoriOodaira1986,MoriOodaira1987}. The regularly weighted
framework of \citet{Moldavskaya2026} supplies the variance asymptotics
and exact LIL used here. We apply these results directly to the
exposure-weighted aggregate. Its recurrent absolute deviations have
scale
\[
b_n=n^{1-d-\alpha m/2}\ell_e(n)L_0(n)^{m/2}
       (2\log\log n)^{m/2},
\]
with a coefficient determined by the response, the covariance
amplitude, and the variance-normalized LIL constant
$\Lambda_m(\alpha,d)$. The power of the iterated-logarithm factor is
half the Hermite rank.

Dividing cumulative loss by cumulative exposure gives
$\widehat\mu_n=\cC_n/V_n$, an unbiased estimate of $\mu$ and a scalar,
zero-lag least-squares instance of the weighted empirical risk
minimization studied by \citet{MoldavskayaML2026}. We give a direct
proof of its consistency and exact LIL. Within the admissible class, normalization by $V_n$ yields the
error rate $n^{-\alpha m/2}L_0(n)^{m/2}(2\log\log n)^{m/2}$,
independent of the exposure profile. The exposure exponent $d$
affects the sharp constant.

Two nonnegative responses illustrate the results: a lognormal response
of Hermite rank one and a symmetric quadratic response of rank two.
Their explicit Hermite coefficients and covariance functions show how
the form of the response changes the aggregate and normalized-loss
scales. For the quadratic example, the spectral representation in
\citet{MoldavskayaRankTwo2026} expresses the fluctuation coefficient
through the largest eigenvalue of a positive integral operator.
Complementary high-rank asymptotics for the same constants, including
an explicit attractive log-gas limit, are developed in
\citet{MoldavskayaHighRank2026}. The contribution of the present paper is
the actuarial specialization of
the weighted LIL, its consequence for loss per unit of exposure, and
explicit calculations for these two response models.

Section~\ref{sec:model} specifies the model and assumptions.
Section~\ref{sec:aggregate-lil} gives the variance scale and
aggregate-loss LIL, and Section~\ref{sec:unit-exposure} treats loss
per unit of cumulative exposure. Section~\ref{sec:examples} develops
the two response models, the rank-two spectral coefficient and a
short simulation of lognormal losses.
Section~\ref{sec:discussion} explains the actuarial interpretation.
Appendix~\ref{app:constant} recalls the variational definition of the
LIL constant.

\section{Model and assumptions}
\label{sec:model}

\subsection{Losses and exposure}

Accounting periods are indexed by $t=1,2,\ldots$.
Let $(X_t)_{t\geq1}$ be a stationary Gaussian sequence with $\E X_t=0$ and
$\E X_t^2=1$. The variable $X_t$ represents a systematic risk factor
in period $t$. We model the loss per unit of exposure by
\begin{equation}
Y_t=g(X_t),\qquad
\mu=\E g(X_1),\qquad
G(x)=g(x)-\mu,
\label{eq:loss-response}
\end{equation}
where $g:\R\to[0,\infty)$ is measurable, belongs to
$L^2(\R,\varphi)$, and satisfies $\Var(g(X_1))>0$.
Here $\varphi$ denotes standard Gaussian probability measure.
These conditions imply $0<\mu<\infty$ and $\E G(X_t)=0$.

Let $e_t>0$ be the deterministic exposure in period $t$. Define
\begin{equation}
C_t=e_tY_t,\qquad
V_n=\sum_{t=1}^n e_t,\qquad
\cC_n=\sum_{t=1}^n C_t.
\label{eq:aggregates}
\end{equation}
Thus $C_t$ is the period loss, $V_n$ is cumulative exposure, and
$\cC_n$ is cumulative loss. Exposure is measured in a fixed unit, and
$Y_t$ is measured in monetary units per unit of exposure. Consequently,
\begin{equation}
\E\cC_n=\mu V_n,\qquad
S_n:=\cC_n-\mu V_n
    =\sum_{t=1}^n e_tG(X_t).
\label{eq:centered-loss}
\end{equation}
Although losses are nonnegative, their centered deviations $S_n$ may
have either sign.

The model describes temporal aggregation of losses generated by a
common systematic factor. The response $g$ specifies the loss per
unit of exposure, and the deterministic sequence $(e_t)$ specifies
the exposure profile. Their combination in
\eqref{eq:loss-response}--\eqref{eq:aggregates} determines period
losses and their cumulative development.
The prescribed exposure profile represents portfolio growth or run-off,
while stationarity of $X_t$ and the fixed response $g$ keep the
loss-per-exposure distribution, including its mean $\mu$, constant
across periods.

\subsection{Dependence, Hermite rank, and exposure weights}

Write $\rho(h)=\E[X_tX_{t+h}]$, with the even extension to negative
lags. Assume
\begin{equation}
\rho(h)\sim c\,h^{-\alpha}L_0(h),\qquad h\to\infty,
\qquad c>0,\quad 0<\alpha<1,
\label{eq:covariance}
\end{equation}
where $L_0$ is measurable, eventually positive, and slowly varying
at infinity. A function $L$ is slowly varying if
$L(rx)/L(x)\to1$ as $x\to\infty$ for every $r>0$.

We use the probabilists' Hermite polynomials
\[
H_q(x)=(-1)^q e^{x^2/2}\frac{d^q}{dx^q}e^{-x^2/2},
\]
so that, in particular, $H_1(x)=x$ and $H_2(x)=x^2-1$.
The centered response has the $L^2(\varphi)$ expansion
\begin{equation}
G(x)=\sum_{q=m}^{\infty}\frac{J_q}{q!}H_q(x),\qquad
J_q=\E[G(X_1)H_q(X_1)],\qquad J_m\ne0,
\label{eq:hermite}
\end{equation}
where $m\geq1$ is its Hermite rank. This first nonzero order
determines the leading covariance decay of the loss response.
Gaussian orthogonality gives
\begin{equation}
\Cov(Y_t,Y_{t+h})
=\sum_{q=m}^{\infty}\frac{J_q^2}{q!}\rho(h)^q
\sim\frac{J_m^2c^m}{m!}\,h^{-\alpha m}L_0(h)^m.
\label{eq:loss-covariance}
\end{equation}
Indeed, $\rho(h)\to0$ and
$\sum_{q\geq m}J_q^2/q!=\E G(X_1)^2<\infty$, so the sum over
$q>m$ is $O(|\rho(h)|^{m+1})$.

The exposure sequence is regularly varying:
\begin{equation}
e_t=t^{-d}\ell_e(t)>0,\qquad d\in\R,
\label{eq:exposure}
\end{equation}
where $\ell_e$ is measurable, slowly varying, and positive at the
positive integers. Throughout, we assume
\begin{equation}
\alpha m<1,\qquad 2d+\alpha m<1.
\label{eq:regime}
\end{equation}
The first restriction keeps the loss covariance in
\eqref{eq:loss-covariance} nonsummable. The second is the weighted
regime used in the LIL results below. In particular,
\[
d<\frac{1-\alpha m}{2}<\frac12.
\]
The strict inequalities in \eqref{eq:regime} specify the parameter
region throughout the article.

For pure power exposures, $\ell_e\equiv1$, the cases $d<0$, $d=0$,
and $d>0$ describe growing, constant, and declining exposure,
respectively. In the general case $d=0$ permits a nonconstant slowly
varying exposure schedule. A single exposure sequence $(e_t)_{t\geq1}$
is used at every observation horizon.

\subsection{Additional assumptions for nonlinear ranks}
\label{subsec:nonlinear-assumptions}

For $m\geq2$, we retain the additional hypotheses of the nonlinear
weighted LIL in \citet{Moldavskaya2026}. Assume that $(X_t)$ admits
a spectral density $f$ with the convention
\begin{equation}
\rho(h)=\int_{-\pi}^{\pi}e^{ih\lambda}f(\lambda)\,d\lambda,
\qquad
f(\lambda)\sim c_f|\lambda|^{\alpha-1}L_0(1/|\lambda|)
\quad(\lambda\to0),
\label{eq:spectral}
\end{equation}
where $c_f>0$ and $f$ is bounded on compact sets away from zero.
With the same $L_0$ in \eqref{eq:covariance} and \eqref{eq:spectral},
the constants satisfy $c=c_\alpha c_f$, where
$c_\alpha=2\Gamma(\alpha)\cos(\pi\alpha/2)$.
We also assume
\begin{equation}
f\in W^{1,1}([ -\pi,-\delta]\cup[\delta,\pi])
\quad\text{for every }\delta\in(0,\pi),
\label{eq:spectral-regularity}
\end{equation}
and that $\ell_e$ is eventually continuously differentiable with
\begin{equation}
\frac{x\ell'_e(x)}{\ell_e(x)}\longrightarrow0,
\qquad x\to\infty.
\label{eq:weight-smoothness}
\end{equation}
Condition \eqref{eq:spectral-regularity} requires an absolutely
continuous representative with integrable derivative on each of
the two intervals away from the origin. Pure power exposures satisfy
\eqref{eq:weight-smoothness}. For nonlinear ranks, the covariance
and spectral conditions are imposed jointly. The variance asymptotics
and the rank-one scalar LIL use the loss-response, covariance and
exposure assumptions of Sections~2.1--2.2.

All logarithms below are natural, and expressions involving
$\log\log n$ are considered for $n\geq3$.

\section{Fluctuations of accumulated losses}
\label{sec:aggregate-lil}

The identity \eqref{eq:centered-loss} places the actuarial aggregate
exactly in the setting of \citet{Moldavskaya2026}, with $a_t=e_t$.
The results in this section are direct actuarial specializations of
its variance asymptotics and scalar LILs.

Let
\begin{equation}
A_n^2=\Var(S_n),\qquad
I_{m,\alpha,d}=\int_0^1\!\int_0^1
x^{-d}y^{-d}|x-y|^{-\alpha m}\,dx\,dy,
\qquad
\sigma_m^2=\frac{J_m^2c^m}{m!}I_{m,\alpha,d}.
\label{eq:normalization}
\end{equation}
The constant $\sigma_m$ denotes the positive square root.

\begin{proposition}[Variance scale]
\label{prop:variance}
Under \eqref{eq:loss-response}, \eqref{eq:covariance},
\eqref{eq:hermite}, \eqref{eq:exposure}, and \eqref{eq:regime},
\begin{equation}
A_n\sim \sigma_m n^{1-d-\alpha m/2}
\ell_e(n)L_0(n)^{m/2}.
\label{eq:variance-asymptotic}
\end{equation}
Moreover,
\begin{equation}
I_{m,\alpha,d}
=\frac{2\,\mathrm B(1-d,1-\alpha m)}{2-2d-\alpha m},
\label{eq:beta-integral}
\end{equation}
where $\mathrm B$ is the beta function.
\end{proposition}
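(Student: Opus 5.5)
The plan is to write the variance as a double sum and reduce it to a Riemann-type approximation of $I_{m,\alpha,d}$. By stationarity and \eqref{eq:loss-covariance},
\[
A_n^2=\sum_{s,t=1}^n e_se_t\,r(t-s),\qquad r(h)=\Cov(Y_1,Y_{1+h})=\sum_{q\ge m}\frac{J_q^2}{q!}\rho(h)^q .
\]
Set $\kappa=J_m^2c^m/m!$, so that $r(h)\sim\kappa h^{-\alpha m}L_0(h)^m$ and $r$ is bounded by $r(0)=\Var(g(X_1))$. The target is $A_n^2\sim \sigma_m^2 n^{2-2d-\alpha m}\ell_e(n)^2L_0(n)^m$. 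This is equivalent to
\[
\frac{1}{n^2}\sum_{s,t=1}^n \frac{e_se_t\,r(t-s)}{e_n^2\,\kappa\,n^{-\alpha m}L_0(n)^m}\;\longrightarrow\; I_{m,\alpha,d},
\]
where $e_n=n^{-d}\ell_e(n)$.

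The summand is a function of $(s/n,t/n)$. For fixed $x,y\in(0,1]$ with $x\ne y$, it converges to $x^{-d}y^{-d}|x-y|^{-\alpha m}$ by the uniform convergence theorem for slowly varying functions, applied to $\ell_e$ and to $L_0$. I would therefore split the index square. The first region is $\{s,t\ge\varepsilon n,\ |t-s|\ge\varepsilon n\}$. There the convergence is uniform, and the Riemann sum converges to the integral over the corresponding region of $[0,1]^2$. The complement consists of the near-diagonal strip $|t-s|<\varepsilon n$ and the boundary strips $\min(s,t)<\varepsilon n$. Its contribution must be shown to be at most $\eta(\varepsilon)$ times the normalization, uniformly in large $n$, with $\eta(\varepsilon)\to0$. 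The integral over the corresponding region of $[0,1]^2$ tends to $0$ as $\varepsilon\to0$, because $I_{m,\alpha,d}<\infty$. Finiteness follows from $\alpha m<1$, and from $d<1$ near the axes, which is implied by \eqref{eq:regime}.

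The main obstacle is controlling the small strips in the presence of slowly varying factors. I would use Potter bounds. For any $\delta>0$ there is $h_0$ such that $|r(h)|\le C\,h^{-\alpha m}L_0(n)^m\max\{(h/n)^{-\delta},(h/n)^{\delta}\}$ for $h_0\le h\le n$. Similarly, $e_t/e_n\le C (t/n)^{-d}\max\{(t/n)^{-\delta},(t/n)^{\delta}\}$ for $t_0\le t\le n$. I would choose $\delta$ small enough that $\alpha m+\delta<1$ and $d+\delta<1$. Then the strip sums are dominated by Riemann sums of the integrable kernel $x^{-d-\delta}y^{-d-\delta}|x-y|^{-\alpha m-\delta}$ over strips of width $\varepsilon$, and these tend to $0$ with $\varepsilon$.

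The finitely many indices below $t_0$ or lags below $h_0$ need separate treatment. There I would use only boundedness of $r$ and of $e_t$ for $t<t_0$. These terms contribute $O(n\max_{t\le n}e_t^2)$ or $O(n^{1-d}\ell_e(n))$ after summation. This is $o(n^{2-2d-\alpha m}\ell_e(n)^2L_0(n)^m)$ precisely because $\alpha m<1$ and $2d+\alpha m<1$. The second condition is what makes $n^{1-d}$ negligible against $n^{2-2d-\alpha m}$, which requires $1-d-\alpha m>0$. In fact $1-d-\alpha m>0$ follows from $d<1-\alpha m$, which holds since $d<(1-\alpha m)/2$.

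For \eqref{eq:beta-integral}, I would use symmetry to write $I=2\int_0^1\int_0^x x^{-d}y^{-d}(x-y)^{-\alpha m}\,dy\,dx$. The substitution $y=xu$ turns the inner integral into $x^{1-d-\alpha m}\mathrm B(1-d,1-\alpha m)$. The remaining integral is $\int_0^1 x^{1-2d-\alpha m}dx=1/(2-2d-\alpha m)$, which is finite since $2d+\alpha m<2$. Finally, $\sigma_m>0$ because $J_m\ne0$, $c>0$, and $I>0$; taking square roots gives \eqref{eq:variance-asymptotic}.
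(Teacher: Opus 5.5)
Your proof is correct, but it reaches \eqref{eq:variance-asymptotic} by a different route from the paper. The paper does no asymptotic computation. It notes that $S_n=\sum_{t\le n}e_tG(X_t)$ is a weighted sum of the type treated in \citet{Moldavskaya2026} with $a_t=e_t$, checks that that paper's hypotheses (A1), (A3), (A4) are the covariance, Hermite-rank and weight conditions of Section~2, and quotes its Proposition~2.4.

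You instead prove the weighted variance asymptotic from first principles, in three steps:
\begin{itemize}
\item uniform convergence of $\ell_e$ and $L_0$ on the bulk region $\{s,t\ge\varepsilon n,\ |t-s|\ge\varepsilon n\}$;
\item Potter bounds that dominate the diagonal and boundary strips by Riemann sums of $x^{-d-\delta}y^{-d-\delta}|x-y|^{-\alpha m-\delta}$;
\item crude bounds for bounded lags and bounded indices.
\end{itemize}
In the second step, $\delta$ must also satisfy $2d+\alpha m+3\delta<2$, so that this kernel is integrable at the corner $(0,0)$. Under \eqref{eq:regime} any $\delta<1/3$ does this. For the short lags, $e_se_t\le(e_s^2+e_t^2)/2$ gives the tidier bound $r(0)(2h_0-1)\sum_{t\le n}e_t^2$ in place of $O(n\max_{t\le n}e_t^2)$.

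The citation keeps the proof to one line and ties $A_n$ to the same external framework on which the LIL corollaries rest. Your argument is self-contained and uses only classical regular-variation tools. It also shows where each inequality in \eqref{eq:regime} is used: finiteness of $I_{m,\alpha,d}$ on one side, and negligibility of the short-lag and small-index terms on the other. Your beta-function evaluation is the same as the paper's, with the two variables interchanged in the substitution.
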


\begin{proof}
Apply Proposition~2.4 of \citet{Moldavskaya2026} to
\eqref{eq:centered-loss}. Its assumptions (A1), (A3), and (A4) are
precisely the covariance, Hermite-rank, and weight conditions above.
For \eqref{eq:beta-integral}, symmetry and the substitution $x=yu$
give
\[
I_{m,\alpha,d}
=2\int_0^1 y^{-d}\int_0^y x^{-d}(y-x)^{-\alpha m}\,dx\,dy
=2\,\mathrm B(1-d,1-\alpha m)
\int_0^1 y^{1-2d-\alpha m}\,dy.
\]
All integrals are finite under \eqref{eq:regime}, and evaluation of
the last integral proves the formula.
\end{proof}

For $m\geq2$, let $\Lambda_m(\alpha,d)$ be the variance-normalized
constant of Corollary~3.9 in \citet{Moldavskaya2026}. Its variational
definition is recalled in Appendix~\ref{app:constant}. Set
$\Lambda_1(\alpha,d)=1$.

\begin{corollary}[Weighted aggregate-loss LIL]
\label{cor:aggregate}
Under the assumptions of Proposition~\ref{prop:variance}, and also
those of Section~\ref{subsec:nonlinear-assumptions} when $m\geq2$,
\begin{equation}
\limsup_{n\to\infty}
\frac{|\cC_n-\mu V_n|}{A_n(2\log\log n)^{m/2}}
=\Lambda_m(\alpha,d)
\qquad\as.
\label{eq:aggregate-lil}
\end{equation}
Equivalently,
\begin{equation}
\limsup_{n\to\infty}
\frac{|\cC_n-\mu V_n|}
{n^{1-d-\alpha m/2}\ell_e(n)L_0(n)^{m/2}
 (2\log\log n)^{m/2}}
=\sigma_m\Lambda_m(\alpha,d)
\qquad\as.
\label{eq:aggregate-explicit}
\end{equation}
When $m=1$, replacing the absolute numerator in
\eqref{eq:aggregate-lil} by $\cC_n-\mu V_n$ gives limsup $1$ and
liminf $-1$.
\end{corollary}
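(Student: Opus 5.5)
The plan is to reduce the corollary to the scalar LILs of \citet{Moldavskaya2026} via the identity \eqref{eq:centered-loss}. That identity writes $\cC_n-\mu V_n=S_n=\sum_{t=1}^n a_tG(X_t)$ with weights $a_t=e_t$. As in the proof of Proposition~\ref{prop:variance}, the covariance, Hermite-rank and weight hypotheses (A1), (A3), (A4) of that paper are exactly \eqref{eq:covariance}, \eqref{eq:hermite}, \eqref{eq:exposure}, and the parameter region \eqref{eq:regime} is the weighted regime there. First I would record that $A_n^2=\Var(S_n)$ is the variance normalization used in that paper. I would also note that $A_n>0$ for large $n$, by \eqref{eq:variance-asymptotic} and $\sigma_m>0$, which holds since $J_m\neq0$ and $I_{m,\alpha,d}>0$.

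For $m=1$, I would invoke the rank-one scalar LIL of \citet{Moldavskaya2026}. It only needs the assumptions of Sections~2.1--2.2 and gives $\limsup S_n/(A_n\sqrt{2\log\log n})=1$ and $\liminf=-1$ almost surely. The absolute-value statement \eqref{eq:aggregate-lil} with $\Lambda_1=1$ follows immediately, because $\limsup|S_n|/b = \max(\limsup S_n/b,\,-\liminf S_n/b)$. For $m\ge2$, I would invoke Corollary~3.9 of \citet{Moldavskaya2026}. This is the point where the extra hypotheses \eqref{eq:spectral}, \eqref{eq:spectral-regularity}, \eqref{eq:weight-smoothness} of Section~\ref{subsec:nonlinear-assumptions} are needed, and I would check them against that paper's spectral and weight-smoothness conditions. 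That corollary states the limsup of $|S_n|/(A_n(2\log\log n)^{m/2})$ equals the variance-normalized constant, which by definition is $\Lambda_m(\alpha,d)$.

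The explicit form \eqref{eq:aggregate-explicit} then follows by multiplying through by the ratio $A_n/(n^{1-d-\alpha m/2}\ell_e(n)L_0(n)^{m/2})$. By Proposition~\ref{prop:variance} this ratio is deterministic and converges to $\sigma_m$. A limsup multiplied by a deterministic sequence converging to a positive finite limit is multiplied by that limit, so the right-hand side becomes $\sigma_m\Lambda_m(\alpha,d)$.

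The only real obstacle is the bookkeeping in the hypothesis matching. One must make sure the cited corollary normalizes by the exact variance $A_n$ rather than by its asymptotic equivalent. If it normalizes by the asymptotic equivalent instead, the same limit-ratio argument converts between the two. One must also check that the spectral convention in \eqref{eq:spectral}, with $c=c_\alpha c_f$, agrees with the one used there.
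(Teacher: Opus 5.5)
Your proposal is correct and follows essentially the same route as the paper. Both apply Theorem~3.3 (rank one), or Corollary~3.9 with the extra conditions supplying (A2), (A6), (A5) (rank $m\geq2$), of \citet{Moldavskaya2026} to $S_n=\sum_{t\le n}e_tG(X_t)$, then replace $A_n$ by its asymptotic equivalent from Proposition~\ref{prop:variance}. Your remarks on recovering the absolute-value statement from the signed rank-one limits and on $\sigma_m>0$ are correct details the paper leaves implicit.
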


\begin{proof}
For $m=1$, apply Theorem~3.3 of \citet{Moldavskaya2026} to
\eqref{eq:centered-loss}. For $m\geq2$, the additional conditions
\eqref{eq:spectral}--\eqref{eq:weight-smoothness} give assumptions
(A2), (A6), and (A5), respectively, of that paper; its Corollary~3.9 therefore
applies. Finally, replace $A_n$ by its asymptotically equivalent
expression in \eqref{eq:variance-asymptotic}.
\end{proof}

The factor $A_n$ is the standard-deviation scale, whereas
$(2\log\log n)^{m/2}$ describes the additional scale of recurrent
pathwise deviations. Its exponent is half the Hermite rank of the
centered response. For $m\geq2$, \eqref{eq:aggregate-lil} describes
the absolute deviations from the mean aggregate loss. For rank one,
the signed statement also identifies the upper and lower endpoints.

\section{Loss per unit of accumulated exposure}
\label{sec:unit-exposure}

Define
\begin{equation}
\widehat\mu_n=\frac{\cC_n}{V_n}
=\frac{\sum_{t=1}^n e_tY_t}{\sum_{t=1}^n e_t}.
\label{eq:estimator}
\end{equation}
This is the observed loss per unit of cumulative exposure and an
estimator of $\mu$, computed from period losses $C_t$ and exposures $e_t$.
Since the denominator is deterministic,
\begin{equation}
\E\widehat\mu_n=\mu,\qquad
\widehat\mu_n-\mu=\frac{S_n}{V_n}.
\label{eq:estimator-identity}
\end{equation}

The estimator also minimizes the weighted empirical contrast
$V_n^{-1}\sum_{t\le n}e_t\ell_0(\vartheta;Y_t)$, where
$\ell_0(\vartheta;y)=\vartheta^2/2-\vartheta y$ differs from squared
loss by a term independent of $\vartheta$. Its score at $\mu$ is
$-G(X_t)$, its curvature is one, and its $L^2$ envelopes on compact
parameter intervals follow from $\E Y_t^2<\infty$. For an interval
containing $\mu$ in its interior, Proposition~4.1 of
\citet{MoldavskayaML2026} gives eventual equality of its constrained
minimizer with $\widehat\mu_n$. Theorem~6.2 of that paper then yields
the following scalar, zero-lag specialization, with score coefficient
$\mathcal J=-J_m/m!$. We give the direct proof from the aggregate LIL.

\begin{corollary}[Consistency and pathwise estimation error]
\label{cor:estimator}
Under the assumptions of Corollary~\ref{cor:aggregate},
\begin{equation}
V_n\sim\frac{n^{1-d}\ell_e(n)}{1-d},
\label{eq:exposure-sum}
\end{equation}
and
\begin{equation}
\limsup_{n\to\infty}
\frac{|\widehat\mu_n-\mu|}
{n^{-\alpha m/2}L_0(n)^{m/2}(2\log\log n)^{m/2}}
=(1-d)\sigma_m\Lambda_m(\alpha,d)
\qquad\as.
\label{eq:estimator-lil}
\end{equation}
In particular, $\widehat\mu_n\to\mu$ almost surely.
\end{corollary}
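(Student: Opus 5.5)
The plan is to reduce \eqref{eq:estimator-lil} to the aggregate LIL of Corollary~\ref{cor:aggregate} through the identity \eqref{eq:estimator-identity}, $\widehat\mu_n-\mu=S_n/V_n$. The only new input is the asymptotic form \eqref{eq:exposure-sum} of the deterministic denominator. I will prove that first, then divide the two asymptotic statements.

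\emph{Step 1: cumulative exposure.} Since $d<1/2<1$ by \eqref{eq:regime}, the sequence $e_t=t^{-d}\ell_e(t)$ is regularly varying with index $-d>-1$. I will apply Karamata's theorem in its sequence form:
\[
\sum_{t=1}^n t^{-d}\ell_e(t)\sim \frac{n^{1-d}\ell_e(n)}{1-d}.
\]
To justify this, note that $\ell_e$ is measurable and slowly varying, so the uniform convergence theorem gives $\ell_e(tn)/\ell_e(n)\to1$ uniformly for $t$ in compact subsets of $(0,1]$. Potter bounds then give $\ell_e(tn)/\ell_e(n)\le C t^{-\varepsilon}$ for $t\in[t_0/n,1]$ with large $t_0$, where I pick $\varepsilon<1-d$. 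Writing the normalized sum as a Riemann sum for $\int_0^1 x^{-d}\,dx=1/(1-d)$, dominated convergence handles the bulk of the range. The finitely many initial terms contribute $O(1)$, which is negligible because $n^{1-d}\ell_e(n)\to\infty$ (as $1-d>1/2>0$). The main technical care lies in this step, specifically the small-$t$ region when $d>0$, where $x^{-d}$ is unbounded. The Potter bound with $\varepsilon<1-d$ is exactly what makes that region integrable. Positivity of $\ell_e$ at the integers guarantees $V_n>0$.

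\emph{Step 2: division.} From \eqref{eq:estimator-identity},
\[
\frac{|\widehat\mu_n-\mu|}{n^{-\alpha m/2}L_0(n)^{m/2}(2\log\log n)^{m/2}}
=\frac{n^{1-d}\ell_e(n)}{V_n}\cdot
\frac{|S_n|}{n^{1-d-\alpha m/2}\ell_e(n)L_0(n)^{m/2}(2\log\log n)^{m/2}}.
\]
The first factor is deterministic and converges to $1-d>0$ by Step~1. The second factor has almost sure limsup $\sigma_m\Lambda_m(\alpha,d)$ by \eqref{eq:aggregate-explicit}. Since the first factor converges to a positive finite constant, the limsup of the product equals that constant times the limsup, which gives \eqref{eq:estimator-lil}.

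\emph{Step 3: consistency.} The normalizer $n^{-\alpha m/2}L_0(n)^{m/2}(2\log\log n)^{m/2}$ tends to zero. This holds because $\alpha m>0$ and, for any $\varepsilon>0$, $L_0(n)=o(n^{\varepsilon})$ and $\log\log n=o(n^{\varepsilon})$. Combined with the finite almost sure limsup in \eqref{eq:estimator-lil}, this gives $\widehat\mu_n\to\mu$ almost surely.
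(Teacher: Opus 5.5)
Your proposal is correct and follows the paper's route: Karamata's theorem gives $V_n$, the identity $\widehat\mu_n-\mu=S_n/V_n$ transfers the aggregate LIL, and a normalizer tending to zero gives consistency. The differences are cosmetic: you sketch Karamata via the uniform convergence theorem and Potter bounds where the paper cites \citet{BGT1987}, and you divide by the explicit scale in \eqref{eq:aggregate-explicit} rather than first computing $A_n/V_n$.
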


\begin{proof}
Because $d<1$, Karamata's summation theorem for regularly varying
sequences yields \eqref{eq:exposure-sum}; see \citet{BGT1987}.
Combining it with Proposition~\ref{prop:variance} gives
\[
\frac{A_n}{V_n}
\sim(1-d)\sigma_m n^{-\alpha m/2}L_0(n)^{m/2}.
\]
Now use \eqref{eq:estimator-identity} and
\eqref{eq:aggregate-lil}. The denominator in
\eqref{eq:estimator-lil} is a positive regularly varying sequence
of index $-\alpha m/2<0$, and therefore tends to zero. The finite
limsup implies almost-sure convergence of the estimation error to zero.
\end{proof}

Normalization by cumulative exposure cancels the factor
$n^{1-d}\ell_e(n)$ from the leading exposure scale. Thus the entire
normalizing sequence in \eqref{eq:estimator-lil},
$n^{-\alpha m/2}L_0(n)^{m/2}(2\log\log n)^{m/2}$, is independent of
$d$ and $\ell_e$. Exposure weighting still affects the sharp constant
through $d$, via $1-d$, $I_{m,\alpha,d}$, and, for nonlinear ranks,
$\Lambda_m(\alpha,d)$; the slowly varying factor $\ell_e$ cancels
completely at this order.

\section{Two nonnegative loss-response models}
\label{sec:examples}

The following examples have the same prescribed mean $\mu>0$ but
different responses to the systematic factor. They illustrate how
the response changes both the effective memory exponent and the
iterated-logarithm factor. Both are theoretical models designed
to make these effects explicit.

\subsection{Lognormal response: Hermite rank one}

Let
\begin{equation}
g_{\mathrm L}(x)=\mu\exp(\beta x-\beta^2/2),\qquad\beta>0.
\label{eq:lognormal-model}
\end{equation}
The response is positive and increasing in $x$. Gaussian integration
gives
\[
\E g_{\mathrm L}(X_t)=\mu,\qquad
\E g_{\mathrm L}(X_t)^2=\mu^2e^{\beta^2}<\infty.
\]
The Hermite generating identity yields
\[
G_{\mathrm L}(x)
=\mu\sum_{q=1}^{\infty}\frac{\beta^q}{q!}H_q(x),
\qquad J_q=\mu\beta^q.
\]
Thus the lognormal losses have Hermite rank one, with leading
coefficient $J_1=\mu\beta$. The exact covariance is
\begin{equation}
\Cov(Y_t,Y_{t+h})
=\mu^2\bigl(e^{\beta^2\rho(h)}-1\bigr)
\sim\mu^2\beta^2c\,h^{-\alpha}L_0(h).
\label{eq:lognormal-covariance}
\end{equation}
It follows either by applying the Gaussian exponential-moment
formula to $(X_t,X_{t+h})$ or by summing
\eqref{eq:loss-covariance}.

Under \eqref{eq:covariance} and \eqref{eq:exposure}, with
$2d+\alpha<1$, the variance coefficient is
\[
\sigma_1=\mu\beta\sqrt{cI_{1,\alpha,d}}.
\]
Corollary~\ref{cor:aggregate} gives
\begin{equation}
\limsup_{n\to\infty}
\frac{|\cC_n-\mu V_n|}
{n^{1-d-\alpha/2}\ell_e(n)L_0(n)^{1/2}
 \sqrt{2\log\log n}}
=\mu\beta\sqrt{cI_{1,\alpha,d}}
\qquad\as,
\label{eq:lognormal-lil}
\end{equation}
while Corollary~\ref{cor:estimator} gives
\begin{equation}
\limsup_{n\to\infty}
\frac{|\widehat\mu_n-\mu|}
{n^{-\alpha/2}L_0(n)^{1/2}\sqrt{2\log\log n}}
=(1-d)\mu\beta\sqrt{cI_{1,\alpha,d}}
\qquad\as.
\label{eq:lognormal-estimator}
\end{equation}
The covariance and exposure assumptions in Sections~2.1--2.2
suffice for this rank-one specialization.

\subsection{Symmetric quadratic stress: Hermite rank two}

Let
\begin{equation}
g_{\mathrm Q}(x)=\mu\bigl((1-\theta)+\theta x^2\bigr),
\qquad 0<\theta\leq1.
\label{eq:quadratic-model}
\end{equation}
Here $\mu(1-\theta)$ is a nonnegative baseline component, and the
remaining component increases with the magnitude of the systematic
factor, irrespective of its sign. Direct calculation gives
\[
\E g_{\mathrm Q}(X_t)=\mu,\qquad
G_{\mathrm Q}(x)=\mu\theta H_2(x),\qquad
J_2=2\mu\theta.
\]
All other centered Hermite coefficients vanish, so $m=2$.
Moreover,
\[
\E g_{\mathrm Q}(X_t)^2=\mu^2(1+2\theta^2)<\infty,
\]
and
\begin{equation}
\Cov(Y_t,Y_{t+h})=2\mu^2\theta^2\rho(h)^2
\sim2\mu^2\theta^2c^2h^{-2\alpha}L_0(h)^2.
\label{eq:quadratic-covariance}
\end{equation}
The centered aggregate is exactly
\[
S_n=\mu\theta\sum_{t=1}^n e_tH_2(X_t).
\]

Assume the conditions of Section~\ref{sec:model}, including the
additional nonlinear assumptions, with
\[
0<\alpha<\frac12,\qquad 2d+2\alpha<1.
\]
Then
\[
\sigma_2=\sqrt{2}\,\mu\theta c\sqrt{I_{2,\alpha,d}},
\]
and the two corollaries imply
\begin{equation}
\limsup_{n\to\infty}
\frac{|\cC_n-\mu V_n|}
{n^{1-d-\alpha}\ell_e(n)L_0(n)(2\log\log n)}
=\sqrt{2}\,\mu\theta c\sqrt{I_{2,\alpha,d}}\,
 \Lambda_2(\alpha,d)
\qquad\as,
\label{eq:quadratic-lil}
\end{equation}
and
\begin{equation}
\limsup_{n\to\infty}
\frac{|\widehat\mu_n-\mu|}
{n^{-\alpha}L_0(n)(2\log\log n)}
=(1-d)\sqrt{2}\,\mu\theta c\sqrt{I_{2,\alpha,d}}\,
 \Lambda_2(\alpha,d)
\qquad\as.
\label{eq:quadratic-estimator}
\end{equation}

\subsection{What the comparison shows}

For a common Gaussian factor and a common exposure schedule, both
examples lie in the present regime when
$0<\alpha<1/2$ and $2d+2\alpha<1$. The lognormal response retains
covariance decay of order $h^{-\alpha}L_0(h)$, whereas the quadratic
response has decay of order $h^{-2\alpha}L_0(h)^2$. Correspondingly,
the aggregate LIL scales contain the powers $n^{1-d-\alpha/2}$
and $n^{1-d-\alpha}$, respectively.

The comparison of aggregate deviations therefore uses the entire
normalization: the polynomial power, the slowly varying factor,
the iterated-logarithm factor and the multiplicative constant.
The two models have the same mean, while their marginal variances
are $\mu^2(e^{\beta^2}-1)$ and $2\mu^2\theta^2$, respectively.

\subsection{Spectral representation of the quadratic-loss coefficient}
\label{subsec:quantitative-constants}

The spectral analysis of \citet{MoldavskayaRankTwo2026} relates
the deterministic coefficient in \eqref{eq:quadratic-lil} to a
positive integral operator. This representation applies throughout
the admissible rank-two parameter region.

For $0<\alpha<1/2$ and $2d+2\alpha<1$, define the positive
self-adjoint Hilbert--Schmidt operator on $L^2(0,1)$ by
\begin{equation}
(T_{\alpha,d}\psi)(x)
=\int_0^1 x^{-d/2}y^{-d/2}|x-y|^{-\alpha}\psi(y)\,dy,
\qquad
\lambda_{\alpha,d}=\lambda_{\max}(T_{\alpha,d}).
\label{eq:ranktwo-operator}
\end{equation}
The rank-two spectral identity
\citep[Sections~2--3]{MoldavskayaRankTwo2026} is
\begin{equation}
\Lambda_2(\alpha,d)
=\frac{\lambda_{\alpha,d}}{\sqrt{2I_{2,\alpha,d}}}.
\label{eq:ranktwo-spectral-constant}
\end{equation}
Substitution of $\sigma_2$ therefore gives the simple coefficient
\begin{equation}
K_{\mathrm Q}:=\sigma_2\Lambda_2(\alpha,d)
=\mu\theta c\,\lambda_{\alpha,d}.
\label{eq:quadratic-coefficient}
\end{equation}
Thus the right-hand sides of \eqref{eq:quadratic-lil} and
\eqref{eq:quadratic-estimator} equal $K_{\mathrm Q}$ and
$(1-d)K_{\mathrm Q}$, respectively. For a prescribed response amplitude
and covariance coefficient, the remaining dependence on the memory
and exposure parameters is encoded by one leading eigenvalue.

Spectral evaluation methods for this coefficient are developed in
\citet{MoldavskayaRankTwo2026}.

\subsection{A lognormal illustration with fractional Gaussian noise}
\label{subsec:simulation}

For illustration, take $X_t=B_{H_0}(t)-B_{H_0}(t-1)$, where $B_{H_0}$
is fractional Brownian motion with $H_0=0.8$ and $\Var(B_{H_0}(1))=1$.
Then $\alpha=0.4$, $c=0.48$ and $L_0\equiv1$. We set $\mu=1$,
$\beta=0.5$ and $e_t=t^{-d}$ for $d=0,0.2$, both satisfying
$2d+\alpha<1$. These are illustrative parameter choices. We generate
$M=500$ independent paths of length $N=2^{18}$ at unit time steps by
FFT circulant embedding \citep{DaviesHarte1987}.
This method preserves the prescribed
finite Gaussian covariance, up to rounding, with $O(N\log N)$ cost
per path; all embedding eigenvalues are checked for nonnegativity.
Each Gaussian path is shared by the two exposure schedules.

Figure~\ref{fig:lognormal-fgn} shows the first replication, selected
in advance. It plots
\[
Z_{n,d}=\frac{\widehat\mu_n-\mu}
{k_d n^{-\alpha/2}\sqrt{2\log\log n}},\qquad
k_d=(1-d)\mu\beta\sqrt{cI_{1,\alpha,d}},
\]
whose signed LIL endpoints are $-1$ and $1$. For the selected values
$d=0$ and $d=0.2$, the asymptotic rank-one coefficients $k_d$ differ by
approximately $0.024\%$; the figure therefore emphasizes normalized
path behavior under the two exposure profiles. The running sums use all
observations; the plot uses $n_j=\operatorname{round}(2^{8+j/64})$,
$j=0,\ldots,640$. Across the 500 replications, the root mean squared
errors at $N$ are $0.0420$ and $0.0419$ for $d=0$ and $d=0.2$,
respectively. The exact finite-horizon standard deviations computed
from \eqref{eq:lognormal-covariance} are $0.0413$ for both schedules
at this precision; the discrepancies lie within Monte Carlo
variability.
The terminal RMSE and exact standard deviation quantify finite-horizon
dispersion and provide a numerical consistency check for the simulation.

\begin{figure}[htbp]
\centering
\includegraphics[width=\linewidth]{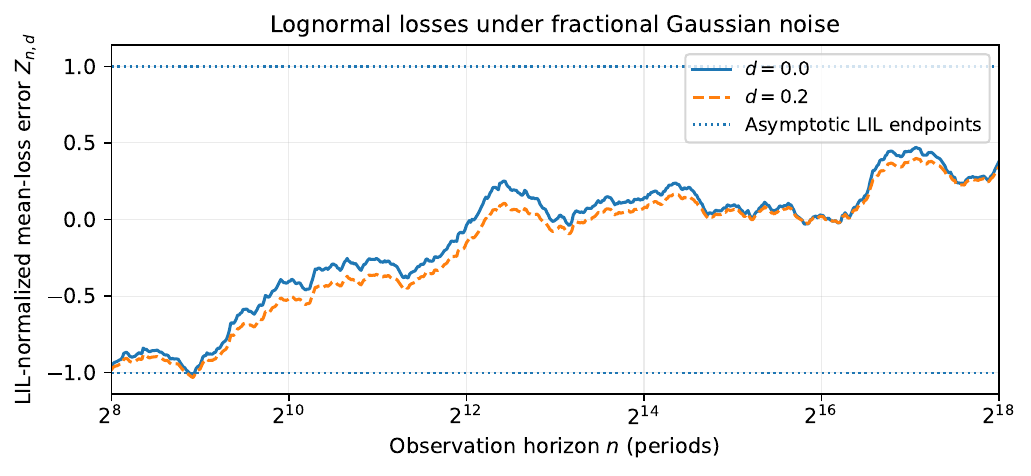}
\caption{LIL-normalized error of the exposure-weighted mean in the
lognormal model driven by fractional Gaussian noise. The two curves
use the same first realization with $H_0=0.8$, $\mu=1$ and $\beta=0.5$;
$d=0$ and $d=0.2$ specify constant and declining exposure.
The horizontal levels $\pm1$ are the asymptotic signed LIL endpoints.
The 641 plotted horizons range from $2^8$ to $2^{18}$.}
\label{fig:lognormal-fgn}
\end{figure}

\FloatBarrier
\section{Actuarial interpretation}
\label{sec:discussion}

The aggregate LIL quantifies recurrent deviations from the expected
loss $\mu V_n$. Let $b_n$ be the denominator in
\eqref{eq:aggregate-explicit} and put
$K=\sigma_m\Lambda_m(\alpha,d)>0$.
For every fixed $\varepsilon\in(0,K)$,
\eqref{eq:aggregate-explicit} implies, with probability one,
\[
|\cC_n-\mu V_n|\leq(K+\varepsilon)b_n
\quad\text{for all sufficiently large }n,
\]
and
\[
|\cC_n-\mu V_n|>(K-\varepsilon)b_n
\quad\text{for infinitely many }n.
\]
These statements describe one loss trajectory over an infinite
horizon, rather than exceedance probabilities at a fixed horizon.
The eventual bound begins at a path-dependent time.

The aggregate and normalized-loss results connect accumulated risk
with estimation of mean loss. For the quadratic response, the
spectral formula in Section~\ref{subsec:quantitative-constants}
expresses the envelope coefficient through one leading eigenvalue.

\appendix
\section{The variance-normalized LIL constant}
\label{app:constant}

For completeness, this appendix records the constant used in
Corollary~\ref{cor:aggregate}. It is the constant already established
in \citet[Section~3 and Corollary~3.9]{Moldavskaya2026}.

For $m\geq2$, let
\[
\HH=\{h\in L^2(\R;\C):h(-x)=\overline{h(x)}
\text{ for almost every }x\},\qquad
\langle h,k\rangle_{\HH}
=\operatorname{Re}\int_{\R}h(x)\overline{k(x)}\,dx.
\]
Symmetric tensor products are taken over this real Hilbert space.
Put $c_\alpha=2\Gamma(\alpha)\cos(\pi\alpha/2)$ and define the
symmetric $L^2(\R^m)$ kernel
\begin{equation}
Q_{m,\alpha,d}(x_1,\ldots,x_m)
=\frac{\displaystyle
\left(\int_0^1 u^{-d}e^{iu(x_1+\cdots+x_m)}\,du\right)
\prod_{j=1}^m |x_j|^{(\alpha-1)/2}}
{\sqrt{m!I_{m,\alpha,d}}\,c_\alpha^{m/2}}.
\label{eq:kernel}
\end{equation}
This normalization gives $m!\|Q_{m,\alpha,d}\|_2^2=1$. The constant
is
\begin{equation}
\Lambda_m(\alpha,d)
=\sup_{\|\xi\|_{\HH}\leq1}
\left|\left\langle Q_{m,\alpha,d},\xi^{\otimes m}
\right\rangle_{\HH^{\otimes m}}\right|.
\label{eq:lambda-definition}
\end{equation}
The source kernel at time one is
$\operatorname{sgn}(J_m)Q_{m,\alpha,d}$. The absolute value in
\eqref{eq:lambda-definition} makes both kernel conventions yield
the same constant. Thus $\Lambda_m$ depends on $m$, $\alpha$ and
$d$, while $\sigma_m$ carries the response amplitude. For rank one,
$\Lambda_1=1$ follows from the scalar theorem used in
Corollary~\ref{cor:aggregate}.

\end{document}